\newtheorem{theorem}{Theorem}[section]
\newtheorem{prop}[theorem]{Proposition}
\newtheorem{question}[theorem]{Question}
\theoremstyle{definition}
\newtheorem{remark}[theorem]{Remark}
\newtheorem{example}[theorem]{Example}
\numberwithin{equation}{section}
\renewcommand{\setminus}{\smallsetminus}
\newcommand{\into}{\hookrightarrow} 
\newcommand{\onto}{\twoheadrightarrow} 
\DeclareMathOperator{\Hom}{Hom} 
\newcommand{\cHom}{\mathcal{H}om}
\DeclareMathOperator{\Spec}{Spec} 
\DeclareMathOperator{\Pic}{Pic}
\def\pow#1{ \llbracket  #1 \rrbracket }
\newcommand{\Def}[1]{\mathrm{Def}_{#1}}
\newcommand\cO{\mathcal{O}}
\renewcommand\AA{\mathbb{A}}
\newcommand\CC{\mathbb{C}}
\newcommand\FF{\mathbb{F}}
\newcommand\HH{\mathbb{H}}
\newcommand{\KK}{\Bbbk}
\newcommand\PP{\mathbb{P}}
\newcommand\ZZ{\mathbb{Z}}
\newcommand\rH{\mathrm{H}}
\newcommand\rN{\mathrm{N}}
\title[Deformations of log Calabi--Yau pairs can be obstructed]{Deformations of log Calabi--Yau pairs \\ can be obstructed}
\author{Simon Felten}
\address{Department of Mathematics, Columbia University, 2990 Broadway, New York, NY 10027, USA}
\email{sf3127@columbia.edu; felten.math@posteo.net}
\author{Andrea Petracci}
\address{Dipartimento di Matematica, Universit\`a di Bologna, Piazza di Porta San Donato 5, Bologna, 40126, Italy}
\email{a.petracci@unibo.it}
\author{Sharon Robins}
\address{Department of Mathematics, Simon Fraser University, 8888 University Drive, Burnaby BC V5A1S6, Canada}
\email{srobins@sfu.ca}
\begin{document}

\begin{abstract}
We exhibit examples of pairs $(X,D)$ where $X$ is a smooth projective variety  and $D$ is an anticanonical reduced simple normal crossing divisor such that the deformations of $(X,D)$ are obstructed.
These examples are constructed via toric geometry.
\end{abstract}

\maketitle

\section{Introduction}

In the introduction we assume that the ground field, denoted by $\CC$, is  algebraically closed  and has characteristic $0$.

\medskip

A \emph{log Calabi--Yau pair} is a pair $(X,D)$ where $X$ is a smooth projective variety and $D$ is a reduced normal crossing divisor on $X$ such that $K_X + D$ is linearly trivial. 
Most authors, e.g.\ \cite{hacon_xu, kollar_xu, corti_kaloghiros, ask_no_toric_model, mauri}, consider more general definitions and allow singularities on $X$, but we do not do that.

By comparing with the celebrated Bogomolov--Tian--Todorov theorem \cite{bogomolov, tian, todorov} (see also \cite{ran, kawamata, fantechi_manetti, AlgebraicBTT2010, AbstractBTT2017, clm, ffr}), which asserts that deformations of smooth proper varieties over $\CC$ with trivial canonical bundle are unobstructed, one could ask:

\begin{question} \label{question}
Are deformations of log Calabi--Yau pairs $(X,D)$ over $\CC$ unobstructed?
\end{question}

By definition, deformations of a pair $(X,D)$ are (not necessarily locally trivial) deformations of the closed embedding $D \into X$; in particular, the singularities of $D$ are allowed to be smoothed.

The answer to Question~\ref{question} is positive if at least one of the following three additional assumptions is satisfied.
\begin{enumerate}
	
	\item[(i)] $D$ is smooth. This is due to Iacono~\cite{iacono_pairs} and to Sano~\cite[Remark~2.5]{sano} independently
	(see also \cite{Kontsevich_generalized, KKP_Hodge_aspects, LiuRaoWan2019, Wan, felten_petracci}). Note that in this case the deformations of $(X,D)$ are locally trivial and coincide with the log smooth deformations of the log scheme given by $X$ equipped with the divisorial log structure associated to $D$.
	
	\item[(ii)] $X$ is weak Fano, i.e.\ $-K_X$ is big and nef.
	This follows from the unobstructedness of deformations of weak Fano manifolds by Sano~\cite[Theorem~1.1]{sano} and the vanishing of $\rH^1(N_{D/X})$ by Kawamata--Viehweg vanishing.
	
	\item[(iii)] $X$ is a surface. If $X$ is rational, this is due to Friedman~\cite[Proposition~3.5]{friedman}.
	If $X$ is irrational, then $D$ is smooth by a slight generalisation of the proof of \cite[Proposition~1.3]{GHK1} (see also \cite[Proposition~19]{kollar_xu}) and so one concludes thanks to (i).
	\end{enumerate}

\medskip

Examples of log Calabi--Yau pairs are \emph{toric pairs}, i.e.\ pairs $(X,D)$ where $X$ is a smooth projective toric variety and $D$ is the \emph{toric boundary} of $X$, i.e.\ the reduced sum of the torus-invariant prime divisors of $X$. Note that the complement of the toric boundary of $X$ is the big torus of $X$, i.e.\ the open torus-orbit.

If $(X,D)$ is a toric pair, then $X$ is rational, $-K_X$ is big, $D$ has simple normal crossings (i.e.\ the irreducible components of $D$ are smooth), and the pair $(X,D)$ has maximal intersection in the sense that the snc divisor $D$ has $0$-dimensional strata (see \cite{kollar_xu}).

The purpose of this paper is to provide a negative answer to Question~\ref{question} by exhibiting  particular toric pairs $(X,D)$:

\begin{theorem} \label{thm:main}
For every integer $n \geq 3$,
there exists a smooth projective toric $n$-fold $X$ such that the deformations of the pair $(X,D)$, where $D$ denotes the toric boundary of $X$, are obstructed.
\end{theorem}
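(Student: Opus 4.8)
The plan is to make the deformation theory of a pair $(X,D)$ completely explicit and then, in the toric case, reduce it to the cohomology of a single line bundle on the double locus of $D$. By definition a deformation of $(X,D)$ is a deformation of the closed embedding $D\into X$ in which $X$ is allowed to vary, so the corresponding deformation functor is governed by the two-term complex of sheaves
\[
\mathscr{L}^\bullet \;:=\; \bigl[\,T_X \xrightarrow{\ \phi\ } N_{D/X}\,\bigr]
\]
concentrated in degrees $0$ and $1$, where $\phi$ sends a vector field to the section of the normal sheaf it induces (equivalently, $\phi$ measures the failure of a derivation to preserve the ideal sheaf of $D$). This is the sheafy incarnation of Iacono's differential graded Lie algebra for the pair, so $T^1_{(X,D)}=\mathbb{H}^1(X,\mathscr{L}^\bullet)$ and the obstructions lie in $T^2_{(X,D)}=\mathbb{H}^2(X,\mathscr{L}^\bullet)$. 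When $D$ has simple normal crossings one has $\ker\phi=T_X(-\log D)$, and I would identify the cokernel $\mathcal{Q}:=\operatorname{coker}\phi$ — using the local model $\{x_1\cdots x_k=0\}\subset\AA^n$, where $\phi$ is given by $\partial_{x_i}\mapsto \prod_{j\neq i}x_j$ — with a line bundle supported on the reduced double locus $D^{[2]}:=\bigcup_{i<j}(D_i\cap D_j)$; concretely $\mathcal{Q}\cong\mathcal{O}_{D^{[2]}}(-K_X)$, up to a twist by signs coming from the branches of $D$.

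\medskip

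Now specialise to a toric pair. The log tangent sheaf $T_X(-\log D)\cong\mathcal{O}_X\otimes_{\ZZ}N$ is free, hence has no higher cohomology since $\rH^{>0}(\mathcal{O}_X)=0$ for a complete toric variety, so the hypercohomology spectral sequence of $\mathscr{L}^\bullet$ degenerates and yields canonical isomorphisms
\[
T^1_{(X,D)}\;\cong\;\rH^0\!\bigl(D^{[2]},\mathcal{O}_{D^{[2]}}(-K_X)\bigr),
\qquad
T^2_{(X,D)}\;\cong\;\rH^1\!\bigl(D^{[2]},\mathcal{O}_{D^{[2]}}(-K_X)\bigr).
\]
Here $D^{[2]}$ is a connected union of $(n-2)$-dimensional toric varieties glued along their toric strata, so both sides are combinatorial invariants of the fan; grading everything by the character lattice $M$ makes the computation fully explicit and links it with Altmann's toric deformation theory. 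For $n=2$ the double locus is a finite reduced set of points, $\rH^1$ vanishes, and one recovers the known unobstructedness; the point is that for $n\geq 3$ the double locus is positive-dimensional, so $\rH^1$ can be nonzero.

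\medskip

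It remains to choose the fan. I would first arrange a smooth complete toric threefold for which $-K_X$ restricts with sufficiently negative degree to enough of the torus-invariant $\PP^1$'s forming $D^{[2]}$ that $T^2_{(X,D)}=\rH^1(D^{[2]},\mathcal{O}(-K_X))\neq 0$; a projective bundle over a Hirzebruch surface with a large twist, or a suitable iterated toric blow-up of $\PP^3$, should produce such walls. Since $T^2\neq 0$ does not by itself mean the deformations are obstructed, the essential step is then to exhibit an explicit first-order deformation that does not extend: to compute the primary obstruction — the quadratic part of the Kuranishi map, induced by the bracket of Iacono's differential graded Lie algebra — in the combinatorial model and show that it is nonzero. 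An efficient route is to identify $T^1_{(X,D)}$ and $T^2_{(X,D)}$, in a suitable range of $M$-degrees, with a weight-graded piece of the deformation functor of the Gorenstein affine toric cone over $(X,-K_X)$, and to transport an already-known obstructed toric example (in the spirit of Altmann--Christophersen and of Petracci) into the ``pair'' part. Finally, for general $n\geq 3$ one extends the example, e.g.\ by passing to $\bigl(X\times\PP^{n-3},\partial(X\times\PP^{n-3})\bigr)$, which is again toric with anticanonical simple normal crossing boundary: its double locus contains $D^{[2]}\times\PP^{n-3}$, the restriction of $-K$ to the $\PP^{n-3}$ factor is positive, and a Künneth argument shows the obstructed class persists.

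\medskip

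The hard part will be this last obstructedness step. A nonzero $T^2$ never forces obstructedness, so one genuinely has to produce a deformation that fails to lift — i.e.\ carry out the primary (or some higher) obstruction computation and verify it is nonzero — either by a direct calculation on a cleverly chosen small fan, or by importing it from a known obstructed affine toric deformation problem while tracking the $M$-grading carefully enough that the obstruction is not absorbed into an irrelevant weight. Pinning down a fan that makes this obstruction both nonzero and checkable is the technical core of the argument.
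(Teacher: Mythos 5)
Your cohomological setup is correct and is in fact a cleaner packaging than the one the paper uses: for a toric pair the complex $T_X\to N_{D/X}$ has $\mathcal{H}^0=T_X(-\log D)\cong N\otimes_\ZZ\cO_X$ (free, hence cohomologically trivial in positive degrees) and $\mathcal{H}^1\cong\cO_{D^{[2]}}(-K_X)$, so the tangent space of $\Def{(X,D)}$ is $\rH^0(D^{[2]},\cO_{D^{[2]}}(-K_X))$ and the obstructions land in $\rH^1(D^{[2]},\cO_{D^{[2]}}(-K_X))$. (The paper only uses the long exact sequence of $T_X\to N_{D/X}$ and deliberately avoids even proving that $\HH^2$ of this complex is an obstruction space; your route needs that identification, plus an explicit quadratic form on it.) However, the theorem is an existence statement, and your proposal never establishes existence: you do not fix a fan, you do not compute any obstruction, and you explicitly defer the nonvanishing of the primary obstruction as ``the technical core.'' As you yourself note, $T^2\neq 0$ proves nothing; so what is missing is precisely the content of the theorem. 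Two of the specific shortcuts you suggest are also shaky: the ``Gorenstein affine cone over $(X,-K_X)$'' cannot be used as stated, because in any counterexample $-K_X$ is necessarily not big and nef (otherwise $X$ is weak Fano and the pair is unobstructed by Sano plus Kawamata--Viehweg), so there is no ample polarisation to cone over; and for $n\geq 4$ a K\"unneth statement about $\rH^1$ of the double locus of $X\times\PP^{n-3}$ does not by itself show that the obstruction \emph{map} remains nonzero.

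It is worth seeing how the paper sidesteps exactly the step you identify as hard. It never computes an obstruction for the \emph{pair} at all. Instead it takes a smooth projective toric $3$-fold $X$ (a $\PP^1$-bundle over the second Hirzebruch surface, close to your guess) for which the Ilten--Turo combinatorial formula exhibits a nonzero cup product $\rH^1(T_X)_{u'}\otimes\rH^1(T_X)_{u''}\to\rH^2(T_X)_{u'+u''}$, hence (by polarisation) a first-order deformation $\xi$ of $X$ alone with $q(\xi)\neq 0$. It then checks the purely cohomological vanishings $\rH^1(\cO_X(D))_{u'}=\rH^1(\cO_X(D))_{u''}=0$; since the map $\rH^1(T_X)\to\rH^1(N_{D/X})\cong\rH^1(\cO_X(D))$ preserves the $M$-grading (because $F=\prod_\rho x_\rho$ and its partials are monomials), $\xi$ dies in $\rH^1(N_{D/X})$ and therefore lifts along the forgetful map $\Def{(X,D)}\to\Def{X}$; any lift of an unextendable deformation is unextendable. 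So the only ``bracket'' ever computed is the one for $X$ without boundary, for which the combinatorial calculus already exists. If you want to pursue your double-locus formulation, you would need to either develop the analogous combinatorial bracket on $\rH^0(D^{[2]},\cO(-K_X))$ or, more economically, graft the paper's lifting trick onto your setup; as written, the proposal is a promising framework with the decisive step missing.
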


The idea of the proof is as follows.
With the methods of \cite{ilten_turo} one produces examples of smooth projective toric varieties $X$ which are obstructed.
More precisely, one finds a first-order deformation $\xi$ of $X$ which cannot be extended to the second order.
Let $D$ denote the toric boundary of $X$, and consider the forgetful map
\begin{equation} \label{eq:forgetful}
	\Def{(X,D)} \longrightarrow \Def{X}.
\end{equation}
By using the torus action and the consequent grading on all relevant cohomology groups (see \S\ref{sec:toric-M}), we show that, in some examples, the deformation $\xi$ lies in the image of \eqref{eq:forgetful}.
Obviously, no preimage of $\xi$ can be extended to the second order, therefore we have constructed a first-order deformation of $(X,D)$ which is obstructed.

Actually the varieties $X$ we consider have a description also outside toric geometry: they are products of $\PP^{n-3}$ with the projectivisation of a certain split rank-$2$ vector bundle on the second Hirzebruch surface.

Note that the obstructed first-order deformation $\xi$ of $X$ is not homogeneous with respect to the grading on $\rH^1(X,T_X)$ induced by the Euler sequence; indeed, homogeneous first-order deformations of smooth projective toric varieties are unobstructed \cite{ilten_vollmert}
(see also \cite{mavlyutov, hochenegger_ilten, petracci_manuscripta}).

\medskip

We conclude the introduction with a question in Hodge theory. Assume now that the ground field $\CC$ is the field of complex numbers.
The unobstructedness of deformations of $n$-dimensional smooth projective complex varieties $X$ with trivial canonical bundle  can be proved by using the fact that $\rH^1(T_X) \simeq \rH^1(\Omega_X^{n-1})$ is topological, i.e.\ a Hodge piece of $\rH^n(X,\CC)$ (see the $T^1$-lifting criterion in \cite{ran}).
Similarly, if $(X,D)$ is an $n$-dimensional snc log Calabi--Yau pair, then the \emph{locally trivial} deformations of $(X,D)$ are unobstructed because $\rH^1(T_X(-\log D)) \simeq \rH^1(\Omega_X^{n-1}(\log D))$ is topological, indeed a part of $\rH^n(X \setminus D,\CC)$.
It is an interesting open question, suggested to us by Richard Thomas, to investigate whether there is a topological/Hodge-theoretic explanation of the obstructedness of the (not necessarily locally trivial) deformations of our examples.

\subsection*{Notation and conventions}
In the rest of the paper the ground field, denoted by $\KK$, is an arbitrary field of characteristic different from $2$.
However, \S\ref{sec:defo_pairs} and Proposition~\ref{prop:ses} are valid over a field of arbitrary characteristic.


\subsection*{Acknowledgements}
The first named author thanks the IH\'ES for its hospitality. The second named author wishes to thank Richard Thomas for several fruitful conversations concerning Question~\ref{question},
and is grateful to Enrica Floris and to Anne-Sophie Kaloghiros for helpful discussions about birational geometry.
The third named author would like to thank Nathan Ilten for helpful conversations.
All authors thank Donatella Iacono and Nathan Ilten for useful comments on a preliminary version of this article.

\section{Cup product} \label{sec:cup_product}

Let $X$ be a smooth variety over $\KK$.
Let $\Def{X}$ denote the deformation functor of $X$.
It is well known that $\rH^1(T_X)$ is the tangent space of $\Def{X}$ and 
$\rH^2(T_X)$ is an obstruction space of $\Def{X}$, where $T_X$ is the tangent sheaf of $X$, i.e.\ the sheaf of $\KK$-derivations of $\cO_X$.

By composing the cohomology product $\rH^1(T_X) \otimes_\KK \rH^1(T_X) \to \rH^2(T_X \otimes_\KK T_X)$ with the morphism induced on $\rH^2$ by the Lie bracket $[ \cdot, \cdot ] \colon T_X \otimes_\KK T_X \to T_X$, one gets a symmetric bilinear form
\begin{equation} \label{eq:cup_product}
	b \colon \rH^1(T_X) \otimes_\KK \rH^1(T_X) \longrightarrow \rH^2(T_X)
\end{equation}
which is called the \emph{cup product}.
If one chooses an affine open cover of $X$ and uses \v{C}ech cohomology to describe $\rH^i(T_X)$, then the cup product is given by
\begin{equation*}
b(\xi,\xi') = \{ [\xi_{ij}, \xi_{jk}'] \}_{i,j,k}
\end{equation*}
for $\xi= \{ \xi_{ij} \}_{i,j}$ and $\xi'= \{ \xi_{ij}' \}_{i,j}$.
If one uses alternating \v{C}ech cocycles, then 
\begin{equation*}
b(\xi,\xi') =	 \left\{\frac{ [\xi_{ij}, \xi_{jk}'] - [ \xi_{jk}, \xi_{ij}' ] }{2} \right\}_{i,j,k}.
\end{equation*}
We refer the reader to \cite[\S2]{ilten_turo} for a thorough account.

Let
\[
q \colon \rH^1(T_X) \longrightarrow \rH^2(T_X)
\]
be the quadratic form associated to $b$, i.e.\ $q(\xi) = b(\xi,\xi)$.
In terms of \v{C}ech cocycles we have
\[
q \left( \left\{ \xi_{ij} \right\}_{i,j}  \right) = \left\{ \left[ \xi_{ij}, \xi_{jk}  \right] \right\}_{i,j,k}.
\]
The quadratic form $q$ is very important in deformation theory and is called the \emph{first obstruction}: if $\xi \in \rH^1(T_X)$ is a first-order deformation of $X$ (i.e.\ a deformation of $X$ over $\Spec \KK[t]/(t^2)$), then $\frac{1}{2} q(\xi) \in \rH^2(T_X)$ is the obstruction to lift $\xi$ to a deformation over $\Spec \KK[t] / (t^3)$.

This implies that, up to a multiplicative constant, the quadratic form $q$ coincides with the degree $2$ terms of the equations which define the base of the miniversal deformation of $X$ as a closed subspace of $\rH^1(T_X)$.
In particular, if $q$ is non-zero, then $\Def{X}$ is not smooth, i.e.\ $X$ is obstructed.

\section{Deformations of pairs} \label{sec:defo_pairs}

Let $X$ be a smooth variety over $\KK$, and
let $D$ be an effective (Cartier) divisor on $X$. Let $\Def{(X,D)}$ be the deformation functor of the pair $(X,D)$, i.e.\ of the closed embedding $D \into X$.
There is an obvious natural transformation \eqref{eq:forgetful} which forgets the divisor $D$.

Let $N_{D/X}$ be the normal bundle of $D$ in $X$: this is $\cHom_{\cO_X} (\cO_X(-D), \cO_D)$.
Consider the map
\begin{equation} \label{eq:d}
d \colon T_X \longrightarrow N_{D/X}
\end{equation}
which maps every $\KK$-derivation $\partial \colon \cO_X \to \cO_X$ to the composition $\pi \circ \partial \vert_{\cO_X(-D)}$, where $\partial \vert_{\cO_X(-D)}$ is the restriction of $\partial$ to the ideal $\cO_X(-D)$ of $D$ in $X$ and $\pi \colon \cO_X \onto \cO_D$ is the surjection induced by the closed embedding $D \into X$.
It is easy to show that $d$ is a well-defined homomorphism of $\cO_X$-modules.

Let $A^\bullet$ be the $2$-term cohomological complex given by $d$, i.e.\ $T_X$ is the term in degree $0$, $N_{D/X}$ is the term in degree $1$, all the other terms are zero, and the differential from degree $0$ to degree $1$ is $d$.
There is an obvious exact triangle
\begin{equation*}
	A^\bullet \longrightarrow T_X \overset{d}{\longrightarrow} N_{D/X} \longrightarrow
\end{equation*}
which induces the long exact sequence
\begin{align*}
0 & &\longrightarrow  &	&\HH^0(A^\bullet) & &\longrightarrow & &\rH^0(T_X) & &\longrightarrow & &\rH^0(N_{D/X}) & & & \\
& &\longrightarrow  &	&\HH^1(A^\bullet) & &\longrightarrow & &\rH^1(T_X) & &\longrightarrow & &\rH^1(N_{D/X}) & & & \\
& &\longrightarrow  &	&\HH^2(A^\bullet) & &\longrightarrow & &\rH^2(T_X) & &\longrightarrow & &\rH^2(N_{D/X})&  . & &
\end{align*}
By \cite[Proposition~8]{smith_varley} $\HH^1(A^\bullet)$ is the tangent space of $\Def{(X,D)}$ and the map induced by \eqref{eq:forgetful} on the tangent spaces coincides with the map $\HH^1(A^\bullet) \to \rH^1(T_X)$ in the long exact sequence above.

\begin{remark}
One could prove that $\HH^2(A^\bullet)$ is an obstruction space for $\Def{(X,D)}$, but we will not need this result below.
We just remark that the identification of $\HH^2(A^\bullet)$ as an obstruction space for $\Def{(X,D)}$ allows one to recover the well-known criterion that says that the forgetful map~\eqref{eq:forgetful} is smooth if $\rH^1(N_{D/X}) = 0$; indeed, under the assumption $\rH^1(N_{D/X}) = 0$, the map~\eqref{eq:forgetful} induces a surjection on tangent spaces and an injection on obstruction spaces.
\end{remark}

\begin{remark}
All the discussion above works for pairs $(X,D)$ where $X$ is a smooth variety and $D$ is an effective Cartier divisor on $X$.
Note that $D$ might be non-reduced.
If, in addition, one assumes that $D$ is smooth, then one gets the residue sequence
\begin{equation*}
	0 \longrightarrow \Omega_X \longrightarrow \Omega_X (\log D) \longrightarrow \cO_D \longrightarrow 0,
\end{equation*}
and by dualising it one gets the short exact sequence
\begin{equation*}
	0 \longrightarrow T_X(-\log D) \longrightarrow T_X \longrightarrow N_{D/X} \longrightarrow 0,
\end{equation*}
which shows
that $T_X(-\log D)$ is quasi-isomorphic to $A^\bullet$.
If $D$ is only assumed to be snc, then $A^\bullet$ might not be quasi-isomorphic to $T_X(-\log D)$: indeed, whereas $A^\bullet$ controls all deformations of the pair $(X,D)$, $T_X(-\log D)$ controls only locally trivial deformations of $(X,D)$.
\end{remark}

\section{Toric geometry and $M$-gradings}
\label{sec:toric-M}

Let $N$ be a lattice of rank $n$, let $M$ be its dual, and let $\langle \cdot, \cdot \rangle \colon M \times N \to \ZZ$ be the duality pairing.
Consider the torus $T_N = N \otimes_\ZZ 
\mathbb{G}_\mathrm{m} = \Spec \KK[M]$, where $\mathbb{G}_\mathrm{m} = \Spec \KK[x,x^{-1}]$ is the $1$-dimensional algebraic torus.
If one has a fan $\Sigma$ in $N$, one gets a toric variety equipped with an action of $T_N$.

The set of the $1$-dimensional cones (aka rays) of $\Sigma$ is denoted by $\Sigma(1)$, and the divisor corresponding to $\rho \in \Sigma(1)$ is denoted by $D_{\rho}$.
With slight abuse of notation, the primitive lattice generator of a ray $\rho\in \Sigma(1)$ is denoted again by $\rho$. For more details about toric varieties we refer the reader to \cite{cls}.

If $X$ is a toric variety and $U \subseteq X$ is an \emph{affine toric} open subscheme (i.e.\ the affine toric variety associated to a cone in the fan $\Sigma$ defining $X$), then the torus action on $U$ induces a natural $M$-grading on $\Gamma(U, \cO_X(D))$ for every torus-invariant $\ZZ$-divisor $D$.
If $U' \subseteq U$ is a smaller affine toric subscheme, then the restriction maps preserve the $M$-grading.
In particular, the \v{C}ech complex with respect to the open affine covering given by the maximal cones of the fan $\Sigma$ is naturally $M$-graded; therefore, one has $M$-gradings on the cohomology groups $\rH^i(X, \cO_X(D))$ for every torus-invariant $\ZZ$-divisor $D$.
Note that the isomorphism class of the sheaf $\cO_X(D)$ depends only on the linear equivalence class of $D$, whereas the $M$-gradings on $\rH^i(X, \cO_X(D))$ depend on the divisor $D$.
The homogeneous part of $\rH^i(X, \cO_X(D))$ of degree $u \in M$ is denoted by $\rH^i(X, \cO_X(D))_u$.

Now let $X$ be a smooth toric variety. Using the $M$-grading on $\cO_X(D)$, we induce an $M$-grading on $T_X$ via the (dual) Euler sequence. For $D$ the toric boundary, we get also an $M$-grading on the normal bundle $N_{D/X}$ such that $d\colon T_X \to N_{D/X}$ preserves the gradings. In order to construct the gradings and show that they are preserved by $d$, we employ the following proposition, which holds more generally for every effective divisor $D$ on $X$.

\begin{prop} \label{prop:ses}
Let $X$ be a smooth complete toric variety, let $D$ be the effective divisor on $X$ defined by the zero-locus of a homogeneous polynomial $F$ in the Cox ring of $X$, let $\beta \in \Pic(X)$ be the degree of $F$.

Then there is a commutative diagram of coherent sheaves on $X$ with exact rows
\begin{equation*}
	\xymatrix{
0 \ar[r] & \rN_1(X) \otimes_\ZZ \cO_X \ar[r]^{\! \! \! \! \! \! \! (\beta_\rho x_\rho)_{\rho}} \ar[d]^\beta & \bigoplus_{\rho \in \Sigma(1)} \cO_X(D_\rho) \ar[d]^{\left( \frac{\partial F}{\partial x_\rho} \right)_\rho} \ar[r] & T_X \ar[d]^d \ar[r] & 0 \\
0 	\ar[r] & \cO_X \ar[r]^F & \cO_X(D) \ar[r] & N_{D/X} \ar[r] & 0
}
\end{equation*}
where $x_\rho$ denotes the Cox coordinate associated to the ray $\rho$ and $\beta_\rho \in \Pic(X)$ denotes its degree, the group $\rN_1(X)$ is $\Hom_\ZZ(\Pic(X), \ZZ)$, the top exact sequence is the dual of the Euler sequence \cite[Theorem~8.1.6]{cls}, and the right vertical map is the homomorphism~\eqref{eq:d}.
\end{prop}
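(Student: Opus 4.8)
The plan is to build the diagram outward from the middle column: construct the middle vertical map by hand, check that the left-hand square commutes via a weighted Euler identity, and then read off the right-hand square from a local computation identifying the induced map on cokernels with $d$.

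First I would recall the two rows. The bottom row is the standard normal bundle sequence: tensoring $0 \to \cO_X(-D) \to \cO_X \to \cO_D \to 0$ with $\cO_X(D)$ gives $0 \to \cO_X \to \cO_X(D) \to \cO_D(D) \to 0$, where $\cO_D(D) = N_{D/X}$ and the injection is multiplication by the canonical section of $\cO_X(D)$ cutting out $D$, namely $F$ in Cox coordinates; exactness uses that $F$ is a non-zero-divisor, i.e.\ that $D$ is an effective Cartier divisor. The top row is the dual of the generalised Euler sequence $0 \to \Omega_X \to \bigoplus_\rho \cO_X(-D_\rho) \to \Pic(X) \otimes \cO_X \to 0$ of \cite[Theorem~8.1.6]{cls} (the last term is locally free, so dualising preserves exactness). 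What I must make explicit is that, under the standard identifications, the first map $\rN_1(X) \otimes \cO_X \to \bigoplus_\rho \cO_X(D_\rho)$ sends $\ell \mapsto (\ell(\beta_\rho)\, x_\rho)_\rho$, while the left vertical map labelled $\beta$ is evaluation $\ell \mapsto \ell(\beta)$. The form of the top-left map is precisely the derivative of the action on the Cox coordinates of the torus $G = \Hom_\ZZ(\Pic(X), \Gm)$, whose cocharacter lattice is $\rN_1(X)$: a cocharacter $\ell$ scales $x_\rho$ with weight $\ell(\beta_\rho)$, so the associated vector field is $\sum_\rho \ell(\beta_\rho)\, x_\rho\, \partial / \partial x_\rho$, and the dual Euler sequence is the descent to the quotient $X$ of the sequence $0 \to \mathrm{Lie}(G) \otimes \cO \to T_{\widetilde X} \to \pi^* T_X \to 0$ along the Cox cover $\pi \colon \widetilde X \to X$.

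For the middle vertical map, note that $\partial F / \partial x_\rho$ is a homogeneous element of the Cox ring of degree $\beta - \beta_\rho$, so multiplication by it is a morphism $\cO_X(D_\rho) \to \cO_X(D)$; summing over $\rho$ gives $(\partial F / \partial x_\rho)_\rho \colon \bigoplus_\rho \cO_X(D_\rho) \to \cO_X(D)$. Commutativity of the left-hand square then reads
\begin{equation*}
\sum_{\rho \in \Sigma(1)} \ell(\beta_\rho)\, x_\rho\, \frac{\partial F}{\partial x_\rho} \;=\; \ell(\beta)\, F
\qquad \text{for every } \ell \in \rN_1(X),
\end{equation*}
which is the weighted Euler identity: it holds on every monomial $x^a$ occurring in $F$ because $x_\rho\, \partial x^a / \partial x_\rho = a_\rho\, x^a$ and $\sum_\rho a_\rho \beta_\rho = \deg(x^a) = \beta$, and the general case follows by linearity. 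This is the only genuinely computational input, and it is one line.

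Since both rows are short exact and the left-hand square commutes, a diagram chase produces a unique morphism $T_X \to N_{D/X}$ making the right-hand square commute, and it remains to identify this induced morphism with the map $d$ of \eqref{eq:d}. As $N_{D/X}$ is supported on $D$, this can be verified on the restriction to $D$ of the affine toric open cover of $X$, or, equivalently, after pulling back along the (faithfully flat) Cox cover $\widetilde X \subset \AA^{\Sigma(1)}$, on which $D$ has global equation $F$; there all the sheaves in the diagram are free with maps given by the explicit formulas above. On such an open, the $\rho$-th basis section of $\bigoplus_\rho \cO_X(D_\rho)$ maps to the derivation $\partial / \partial x_\rho$, whose image under $d$ is, directly from the definition of $d$ as restriction of a derivation to the ideal of $D$ followed by reduction modulo that ideal, the class of $\partial F / \partial x_\rho$ --- which is exactly the image of the same basis section under $\bigoplus_\rho \cO_X(D_\rho) \to \cO_X(D) \to N_{D/X}$. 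I expect this last identification to be the main point to get right: one must match the intrinsically-defined $d$ against the Cox-coordinate description while keeping the trivialisations consistent with the normalisation that makes the middle map literally $(\partial F / \partial x_\rho)_\rho$. Once that bookkeeping is fixed, exactness of the two rows together with the two commutative squares yields the asserted diagram.
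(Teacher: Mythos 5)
Your proposal is correct and follows essentially the same strategy as the paper: the left square reduces to the weighted Euler identity $\sum_\rho \ell(\beta_\rho)x_\rho\,\partial F/\partial x_\rho = \ell(\beta)F$, and the substance of the proof is the local identification of the map $T_X \to N_{D/X}$ induced on cokernels with the intrinsically defined $d$. The one organisational difference is where you propose to do that local check: the paper restricts to each maximal affine chart $U_\sigma \simeq \AA^n$, dehomogenises $F$ to $f = F(y_1,\dots,y_n,1,\dots,1)$, and verifies commutativity using both $\partial f/\partial y_k = \partial F/\partial x_k(y,1,\dots,1)$ for rays of $\sigma$ and the dehomogenised Euler relations $b_j f = \sum_k a_{kj} y_k\,\partial f/\partial y_k + \partial F/\partial x_j(y,1,\dots,1)$ for the rays outside $\sigma$ (these latter are exactly where the trivialisation $\cO_U = \cO_X(D_j)\vert_U$ for $j \notin \sigma$ enters, via $\partial(\rho_j,0)(y_k) = -a_{kj}y_k$). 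Your alternative of pulling back along the Cox torsor, where $\pi^{-1}(D)$ has global equation $F$ and every sheaf in the diagram is free, is legitimate and avoids the dehomogenisation bookkeeping, but it trades it for a descent argument you only gesture at: you must check that $\pi^* d_X$ is computed by applying a $G$-invariant lift of a vector field to $F$ modulo $F$ (the lift being well defined only up to $\mathrm{Lie}(G)\otimes\cO$, which acts trivially on $N_{\pi^{-1}D/\widetilde X}$ by the same Euler identity). You correctly flag this trivialisation-matching as the main point to get right; carrying it out in either formulation completes the proof.
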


By \cite[Equation~(5)]{ilten_turo} the homomorphism $\bigoplus_{\rho \in \Sigma(1)} \cO_X(D_\rho) \to T_X$ is defined as follows:
the image of a local section $\chi^w$ of $\cO_X(D_\rho)$ over an affine toric subscheme $U \subseteq X$ is the derivation $\partial(\rho,w) \colon \cO_U \to \cO_U$ defined by
\begin{equation} \label{eq:definition_partial_rho_w}
	\partial(\rho,w)(\chi^{u}) = \langle u, \rho \rangle \chi^{u+w}.
\end{equation}

\begin{example} \label{ex:P^n}
For $X = \PP^n$ the diagram in Proposition~\ref{prop:ses} is
\begin{equation*}
	\xymatrix{
		0 \ar[r] & \cO_{\PP^n} \ar[r]^{\! \! \! \! \! \! \! \! \! \! \! \! \!  \! \! \! \!  (x_0, \dots, x_n)} \ar[d]^\beta & \cO_{\PP^n}(1)^{\oplus (n+1)} \ar[d]^{\left( \frac{\partial F}{\partial x_i} \right)_i} \ar[r] & T_{\PP^n} \ar[d]^d \ar[r] & 0 \\
		0 	\ar[r] & \cO_{\PP^n} \ar[r]^{\! \! F} & \cO_{\PP^n}(\beta) \ar[r] & N_{D/\PP^n} \ar[r] & 0
	}
\end{equation*}
where $F \in \KK[x_0, \dots, x_n]$ is a homogeneous polynomial of degree $\beta$.
The commutativity of the left square is the Euler identity
\begin{equation*}
\beta F = \sum_{i=0}^n x_i \frac{\partial F}{\partial x_i}.
\end{equation*}

Let us now analyse the commutativity of the right square. Restrict to the affine chart $U = \{ x_0 \neq 0 \}$ which is isomorphic to $\AA^n$ with affine coordinates $y_k = x_k / x_0$ for $k=1, \dots, n$. Consider the dehomogenisation
\[
f(y_1, \dots, y_n) = F(x_0, \dots, x_n) / x_0^\beta = F(1, y_1, \dots, y_n).
\]
We have the equalities
\begin{equation} \label{eq:P^n_partial_derivative_1}
\frac{\partial F}{\partial x_k}(1, y_1, \dots, y_n) = \frac{\partial f}{\partial y_k} \qquad \text{for } k=1, \dots, n
\end{equation}
and
\begin{equation} \label{eq:P^n_partial_derivative_0}
 \beta f =  \sum_{k=1}^n y_k \frac{\partial f}{\partial y_k} + \frac{\partial F}{\partial x_0}(1, y_1, \dots, y_n).
\end{equation}
By using the trivialisations
\begin{gather*}
\cO_U \simeq \cO_{\PP^n}(1) \vert_U \qquad g \mapsto x_0 g, \\
\cO_U \simeq \cO_{\PP^n}(\beta) \vert_U \qquad g \mapsto x_0^\beta g, \\
\cO_U^{\oplus n} \simeq T_{\PP^n} \vert_U \qquad (g_1, \dots, g_n) \mapsto \sum_{i=1}^n g_i \frac{\partial}{\partial y_i} \quad \text{ and } \quad \left( \partial(y_1), \dots, \partial(y_n) \right) \mapsfrom \partial,
\end{gather*}
the right square in the diagram becomes
\begin{equation*}
	\xymatrix{
	\Gamma(U,\cO_U)^{\oplus (n+1)} \ar[rrr]^{\begin{pmatrix}
			-y_1 & 1 \\
			\vdots & & \ddots \\
			-y_n & & & 1
	\end{pmatrix}} 
\ar[dd]_{
	\left( \frac{\partial F}{\partial x_i}(1,y_1, \dots, y_n)  \right)_{i=0, \dots, n}
}
& & &
\Gamma(U,\cO_U)^{\oplus n}
\ar[dd]^{
\left( \overline{\frac{\partial f}{\partial y_i}} \right)_{i=1,\dots, n}
} \\
\\
\Gamma(U,\cO_U) \ar[rrr]^{\overline{\cdot}}
& & &
\Gamma(U,\cO_{D \cap U})
}
\end{equation*}
where $\overline{\cdot} \colon \Gamma(U,\cO_U) = \KK[y_1, \dots, y_n] \onto \Gamma(U, \cO_{D \cap U}) = \KK[y_1, \dots, y_n] / (f)$ denotes the projection modulo $f$.
This square commutes because of \eqref{eq:P^n_partial_derivative_1} and \eqref{eq:P^n_partial_derivative_0}.
\end{example}

\begin{proof}[Proof of Proposition~\ref{prop:ses}]
The existence of the two short exact sequences is clear.
The commutativity of the left square is the Euler relation \cite[Exercise~8.1.8]{cls}.
It remains to prove the commutativity of the right square. We proceed in a way analogous to Example~\ref{ex:P^n}.

We restrict to the affine toric subscheme $U \subseteq X$ associated to an $n$-dimensional cone $\sigma \in \Sigma$.
Assume that the rays of $\sigma$ are $\rho_1, \dots, \rho_n$, and $\rho_{n+1}, \dots, \rho_{n+r}$ are the rays of $\Sigma$ not in $\sigma$.
Here $r$ is the Picard rank of $X$.
For brevity, set $x_i := x_{\rho_i}$, $D_i := D_{\rho_i}$, $\beta_i := \beta_{\rho_i}$.

Since $X$ is smooth, $\rho_1, \dots, \rho_n$ form a basis of the lattice $N$. Hence we can write
$\rho_j = - \sum_{k=1}^n a_{kj} \rho_k$ for every $j = n+1, \dots, n+r$.
We have that $\beta_{n+1}, \dots, \beta_{n+r}$ is a basis of $\Pic(X)$ and $\beta_k = \sum_{j=n+1} a_{kj} \beta_j$ for every $k=1, \dots, n$.
We also write $\beta = \sum_{j=n+1}^{n+r} b_j \beta_j$.
Let $w_1, \dots, w_n \in M$ be the dual basis of $\rho_1, \dots, \rho_n$.

Set $y_k := \chi^{w_k}$ for every $k=1, \dots, n$. It is clear that $U$ is isomorphic to $\AA^n$ with coordinates $y_1, \dots, y_n$.
Under the Cox isomorphism \cite[Equation~(5.3.1)]{cls} $y_k = x_k \prod_{j=n+1}^{n+r} x_j^{- a_{kj}}$.
The dehomogenisation of $F$ is
\begin{equation*}
	f(y_1, \dots, y_n) = \frac{F}{\prod_{j=n+1}^{n+r} x_j^{b_j}} = F(y_1, \dots, y_n, 1, \dots, 1).
\end{equation*}
There are the following Euler relations:
\begin{equation*}
	b_j F =  \sum_{k=1}^n a_{kj} x_k \frac{\partial F}{\partial x_k} + x_j \frac{\partial F}{\partial x_j} \qquad \text{for  } j=n+1, \dots, n+r.
\end{equation*}
We have
\begin{equation} \label{eq:toric_partial_derivative_y_k}
\frac{\partial f}{\partial y_k} = \frac{\partial F}{\partial x_k}(y_1, \dots, y_n, 1, \dots, 1) \qquad \text{for } k = 1, \dots, n
\end{equation}
and
\begin{equation} \label{eq:toric_partial_derivative_x_j}
	b_j f = \sum_{k=1}^n a_{kj} y_k \frac{\partial f}{\partial y_k} + \frac{\partial F}{\partial x_j}(y_1, \dots, y_n, 1, \dots, 1) \qquad \text{for } j = n+1, \dots, n+r.
\end{equation}

We want to show that the diagram
\begin{equation*}
	\xymatrix{
	\bigoplus_{i=1}^{n+r} \Gamma(U, \cO_X(D_i)) \ar[d]_{\frac{\partial F}{\partial x_i}} \ar[r] & \Gamma(U, T_X) \ar[d]^d \\
	\Gamma(U, \cO_X(D)) \ar[r] & \Gamma(U, N_{D/X})
}
\end{equation*}
commutes.
By using the trivialisations
\begin{gather*}
	\Gamma(U, T_X) \simeq \Gamma (U, \cO_U)^{\oplus n} \qquad \partial \mapsto \left( \partial(y_1), \dots, \partial(y_n) \right) \\
	\Gamma(U, \cO_U) \simeq \Gamma(U, \cO_X(D_k)) \qquad g \mapsto g y_k^{-1} \qquad \text{for } k=1, \dots, n \\
	\Gamma(U, \cO_U) = \Gamma(U, \cO_X(D_j)) \qquad \text{for } j=n+1, \dots, n+r
\end{gather*}
and the two following consequences of \eqref{eq:definition_partial_rho_w}
\begin{gather*}
\partial(\rho_i, -w_i)(\chi^{w_k}) = \langle w_k, \rho_i \rangle \chi^{w_k - w_i} = \delta_{ik} \qquad i=1, \dots, n, \\
\partial(\rho_j,0)(\chi^{w_k}) = \langle w_k, \rho_j \rangle \chi^{w_k} = - a_{kj} y_k  \qquad j=n+1, \dots, n+r,
\end{gather*}
this diagram becomes
\begin{equation*}
	\xymatrix{
\Gamma(U, \cO_U)^{\oplus (n+r)}
\ar[rrrr]^{\ \
\left(\begin{array}{ccc|ccc}
1 & & \\
& \ddots & & & - a_{kj} y_k & \\
& & 1
\end{array}\right)
}
\ar[dd]^{
	\left( \frac{\partial F}{\partial x_i}(y_1, \dots, y_n,1,\dots,1)  \right)_{i=1, \dots, n+r}}
& & & &
\Gamma(U, \cO_U)^{\oplus n}
\ar[dd]^{
	\left( \overline{\frac{\partial f}{\partial y_i}} \right)_{i=1,\dots, n}
}
\\
\\
\Gamma(U, \cO_U)
\ar[rrrr]^{\overline{\cdot}}
& & & &
\Gamma(U, \cO_{D \cap U})
}
\end{equation*}
 where $\overline{\cdot} \colon \Gamma(U, \cO_U) = \KK[y_1, \dots, y_n] \onto \Gamma(U, \cO_{D \cap U}) = \KK[y_1, \dots, y_n] / (f)$ denotes the projection modulo $f$.
 This square commutes because of 
 \eqref{eq:toric_partial_derivative_y_k} and \eqref{eq:toric_partial_derivative_x_j}.
\end{proof}

The restriction to every affine toric open subscheme $U \subseteq X$ of the sheaf homomorphism $\rN_1(X) \otimes_\ZZ \cO_X \to \oplus_{\rho \in \Sigma(1)} \cO_X(D_\rho)$ appearing in the top short exact sequence in Proposition~\ref{prop:ses} is homogeneous with respect to the $M$-grading.
This implies that the cohomology groups $\rH^i(X,T_X)$ inherit an $M$-grading.
We denote by $\rH^i(X,T_X)_u$ the part of $\rH^i(X,T_X)$ of degree $u \in M$.

\begin{remark}
One can prove that the cup product \eqref{eq:cup_product} preserves the $M$-grading;
in \cite{ilten_turo} Ilten and Turo give a combinatorial description of its homogeneous parts with respect to the $M$-grading.
\end{remark}

\section{The example}\label{sec:example}

We construct an explicit smooth projective toric variety of each dimension $n \geq 3$ that turns out to give a counterexample to Question~\ref{question}.

\begin{prop} \label{prop:obstructed}
	For every $n \geq 3$, there exist a smooth projective toric $n$-fold $X$  and elements $u', u'' \in M$, where $M$ denotes the character lattice of the big torus of $X$, such that:
	\begin{enumerate}
		\item the cup product $\rH^1(X,T_X)_{u'} \otimes_\KK \rH^1(X,T_X)_{u''} \to \rH^2(X,T_X)_{u'+u''}$ is non-zero,
		\item $\rH^1(X,\cO_X(D))_{u'} = 0$ and $\rH^1(X,\cO_X(D))_{u''} = 0$, where $D$ is the toric boundary of $X$.
	\end{enumerate}
\end{prop}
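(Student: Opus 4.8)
\emph{Reduction to dimension three.} The plan is to reduce to $n = 3$ and then to build everything by hand on one small toric threefold; the reduction goes as follows. Take $X = Y \times \PP^{n-3}$ with the product fan, where $Y$ is a smooth projective toric threefold to be constructed below; then $M = M_Y \oplus \ZZ^{n-3}$, $T_X = p^*T_Y \oplus q^*T_{\PP^{n-3}}$, and $\cO_X(D) = p^*\cO_Y(D_Y) \otimes q^*\cO_{\PP^{n-3}}(D_{\PP^{n-3}})$, with $p,q$ the two projections and $D_Y, D_{\PP^{n-3}}$ the toric boundaries. Since $\rH^j(\PP^{n-3}, T_{\PP^{n-3}}) = 0$ and $\rH^j(\PP^{n-3}, \cO_{\PP^{n-3}}(D_{\PP^{n-3}})) = 0$ for $j \geq 1$ while $\rH^0(\PP^{n-3}, \cO_{\PP^{n-3}})_0 = \CC$, the Künneth formula (compatible with the $M$-grading) identifies, for $u', u'' \in M_Y \subseteq M$, the cup product of $X$ in these degrees with that of $Y$ — the cross-terms contributing nothing — and shows that $\rH^1(X, \cO_X(D))_{u'} = 0$ whenever $\rH^1(Y, \cO_Y(D_Y))_{u'} = 0$. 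Thus it suffices to prove the statement for $Y$.

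\emph{The threefold and its tangent cohomology.} Following the recipe of \cite{ilten_turo} for producing obstructed smooth toric varieties, I would let $Y = \PP(\cO_{\FF_2} \oplus \mathcal{L})$ for a suitable line bundle $\mathcal{L}$ on the second Hirzebruch surface, specified concretely by writing out its fan $\Sigma$ in $N = \ZZ^3$ (which has only five rays); I would then record $\Pic(Y) \cong \ZZ^2$, the degrees $\beta_\rho$ of the five Cox variables, and $\beta := \sum_{\rho} \beta_\rho$, the class of $\cO_Y(D_Y) = \cO_Y(-K_Y)$. Using the $M$-graded dual Euler sequence $0 \to \rN_1(Y) \otimes_\ZZ \cO_Y \to \bigoplus_{\rho} \cO_Y(D_\rho) \to T_Y \to 0$ of Proposition~\ref{prop:ses}, together with the standard combinatorial computation of the $M$-homogeneous parts of the cohomology of torus-invariant divisors on a complete toric variety (via the \v{C}ech complex of the cover by maximal cones), I would work out $\rH^1(Y, T_Y)$ and $\rH^2(Y, T_Y)$ degree by degree and select two characters $u', u'' \in M$ with $\rH^1(Y, T_Y)_{u'} \neq 0$, $\rH^1(Y, T_Y)_{u''} \neq 0$ and $\rH^2(Y, T_Y)_{u' + u''} \neq 0$ — namely the degrees that carry the obstructed first-order deformation produced by \cite{ilten_turo}.

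\emph{Non-vanishing of the cup product.} This is the heart of the matter, and the step I expect to be the main obstacle: it is not enough to know that the source and target graded pieces are non-zero, one must exhibit classes whose bracket survives in $\rH^2$. I would choose explicit \v{C}ech $1$-cocycles $\xi' = \{\xi'_{ij}\}$ and $\xi'' = \{\xi''_{ij}\}$ representing generators of $\rH^1(Y, T_Y)_{u'}$ and $\rH^1(Y, T_Y)_{u''}$, with components written as combinations of the derivations $\partial(\rho, w)$ of \eqref{eq:definition_partial_rho_w}; because the Lie bracket of two such derivations is again an explicit combination of derivations of the same form, the $2$-cocycle $b(\xi', \xi'') = \{[\xi'_{ij}, \xi''_{jk}]\}_{i,j,k}$ is a finite, completely explicit expression, and I would check directly that its class in $\rH^2(Y, T_Y)_{u' + u''}$ is non-zero, i.e.\ that it is not a \v{C}ech coboundary in that degree. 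Alternatively, one can invoke the combinatorial description of the $M$-homogeneous parts of the cup product from \cite{ilten_turo} and verify that the relevant coefficient does not vanish. (Combined with the unobstructedness of homogeneous deformations of smooth projective toric varieties \cite{ilten_vollmert}, which forces $q(\xi') = q(\xi'') = 0$, this is exactly what makes $\xi' + \xi''$ an obstructed first-order deformation of $Y$.)

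\emph{Vanishing for the pair.} Finally, since $\cO_Y(D_Y) = \cO_Y(-K_Y)$, I would compute $\rH^1(Y, \cO_Y(D_Y))_{u'}$ and $\rH^1(Y, \cO_Y(D_Y))_{u''}$ by the same combinatorial method and verify that both vanish. By the long exact sequence of \S\ref{sec:defo_pairs}, this places $\xi'$ and $\xi''$, hence $\xi' + \xi''$, in the image of the tangent map of the forgetful morphism \eqref{eq:forgetful}, so the obstructed first-order deformation of $Y$ genuinely comes from a first-order deformation of the pair $(Y, D_Y)$. It is precisely the requirement of meeting both conditions in the same two degrees — obstructedness in $\rH^1(T_Y)$ and vanishing of $\rH^1(\cO_Y(-K_Y))$ — that pins down the bundle $\mathcal{L}$; exhibiting a working $\mathcal{L}$ is a finite search once the combinatorial formulae above are in place.
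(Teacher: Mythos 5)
Your strategy is the right one --- it is essentially the paper's: reduce to $n=3$ by taking a product with $\PP^{n-3}$ (the paper does this combinatorially via the complexes $V_{D,u}$ rather than by K\"unneth, but both work), build a $\PP^1$-bundle over $\FF_2$, find two degrees $u',u''$ carrying first-order deformations with non-zero cup product, and check the vanishing of $\rH^1(\cO_X(-K_X))$ in those degrees. The problem is that for an existence statement the entire mathematical content is the exhibited example, and your proposal never exhibits one. The line bundle $\mathcal{L}$ is left unspecified (``a suitable line bundle''), the degrees $u',u''$ are left unspecified (``the degrees that carry the obstructed first-order deformation''), and all three cohomological facts --- non-vanishing of $\rH^1(T_Y)_{u'}$ and $\rH^1(T_Y)_{u''}$, non-vanishing of the cup product in $\rH^2(T_Y)_{u'+u''}$, and vanishing of $\rH^1(\cO_Y(D_Y))$ in both degrees --- are deferred with ``I would work out'' and ``I would check''. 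You correctly flag the cup-product non-vanishing as the main obstacle, but flagging it is not resolving it: knowing that source and target are non-zero does not give a non-zero map, and this is exactly where a concrete computation (the paper uses the combinatorial criterion of Ilten--Turo, Theorem~5.3, applied to an explicit simple $1$-cycle in $V_{\rho_5,u'+u''}$) is unavoidable. As written, the proposal establishes nothing beyond ``if such a $Y$, $u'$, $u''$ exist, the proposition holds for all $n\geq 3$''.

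One concrete slip worth fixing: a projectivisation of a rank-$2$ bundle over $\FF_2$ is a smooth toric $3$-fold of Picard rank $3$, so its fan has $3+3=6$ rays, not five (four from $\FF_2$ plus two for the fibre $\PP^1$). The paper's example indeed has six rays $\rho_1,\dots,\rho_6$, with $u'=(-1,-1,0)$ and $u''=(0,-1,-1)$; the verification that $V_{D,u'}=\emptyset$ and $V_{D,u''}$ is a single point (giving condition (2)) and that the cycle $E_1+E_2+E_3+E_4$ produces a non-zero cup product (giving condition (1)) is the part of the argument your proposal is missing.
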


The rest of this section is devoted to the proof of Proposition~\ref{prop:obstructed}.
Firstly, we consider the case $n=3$; the $3$-fold we construct is the projectivisation of a split rank-$2$ vector bundle on the second Hirzebruch surface.
Secondly, in dimension $n \geq 4$ we consider the product of this $3$-fold with $\PP^{n-3}$.

\begin{example} \label{ex:3fold}
	Consider the lattice $N = \ZZ^3$ and its dual $M$.
	Let $\rho_1, \dots, \rho_6 \in N$ be the columns of the matrix
	\[
	\begin{pmatrix}
		1&0&-1&0&0&0\\
		0&1&2&-1&0&0\\
		0&0&-2&3&1&-1
	\end{pmatrix}.
	\]
	Let $\Sigma$ be the fan in $N$ with rays given by $\rho_1, \dots, \rho_6$ and with the following $3$-dimensional cones: $\sigma_{125}$, $\sigma_{126}$, $\sigma_{145}$, $\sigma_{146}$, $\sigma_{235}$, $\sigma_{236}$, $\sigma_{345}$, $\sigma_{346}$, where $\sigma_{ijk}$ denotes the cone with rays $\rho_i, \rho_j, \rho_k$.
	The fan $\Sigma$ can be visualised by looking at Figure~\ref{fig:cones}: by considering $\rho_6$ as a vertex at infinity, we can describe the maximal cones of $\Sigma$ by the $2$-simplices of the simplicial complex in Figure~\ref{fig:cones}.
	
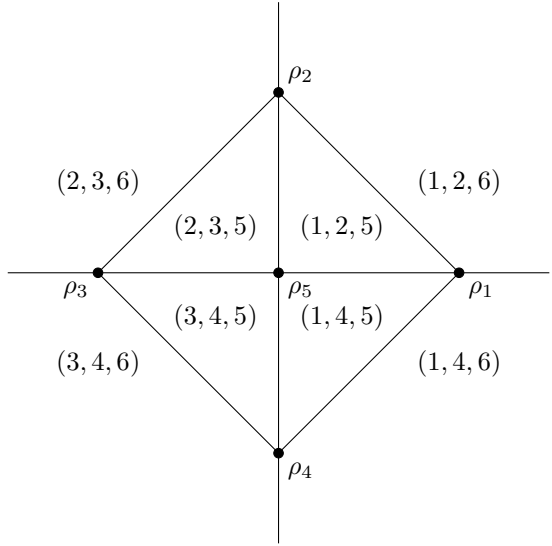
\begin{figure}
\centering
    \begin{tikzpicture}[scale=0.6]
    \draw (4,0)-- (0,4)--(-4,0)--(0,-4)--cycle;
    \draw (0,0)--(6,0);
    \draw (0,0)--(0,6);
    \draw (0,0)--(-6,0);
    \draw (0,0)--(0,-6);

    \node at (1.4,1){$(1,2,5)$};
    \node at (-1.4,1){$(2,3,5)$};
    \node at (-1.4,-1){$(3,4,5)$};
    \node at (1.4,-1){$(1,4,5)$};
    \node at (4,2){$(1,2,6)$};
    \node at (-4,2){$(2,3,6)$};
    \node at (-4,-2){$(3,4,6)$};
    \node at (4,-2){$(1,4,6)$};
    
     \filldraw[black] (0,0) circle (3pt)node[anchor=north west] {$\rho_5$};
     \filldraw[black] (4,0) circle (3pt)node[anchor=north west] {$\rho_1$};
     \filldraw[black] (0,4) circle (3pt)node[anchor=south west] {$\rho_2$};
     \filldraw[black] (-4,0) circle (3pt)node[anchor=north east] {$\rho_3$};
     \filldraw[black] (0,-4) circle (3pt)node[anchor=north west] {$\rho_4$};
\end{tikzpicture}
\caption{A schematic picture of the fan $\Sigma$ in Example~\ref{ex:3fold}.
	The four triangles correspond to the four $3$-dimensional cones in $\Sigma$ which contain $\rho_5$,
	and the four $2$-dimensional unbounded regions correspond to the four $3$-dimensional cones in $\Sigma$ which contain $\rho_6$.}
\label{fig:cones}
\end{figure}

Let $X$ be the toric variety associated to the fan $\Sigma$; it is a smooth projective $3$-fold with Picard rank $3$.
One can prove that $X$ is the projectivisation of a split rank-$2$ vector bundle over the second Hirzebruch surface -- see \cite[Theorem~3.6]{robins}.

For a torus invariant divisor $D=\sum_{\rho\in \Sigma(1)} a_{\rho}D_{\rho}$ and $u\in M$, we consider the simplicial complex
\begin{equation*}
V_{D,u}= \bigcup _{\sigma \in \Sigma} \text{conv} \left( \rho \in \sigma(1) \mid \langle u, \rho \rangle < -a_{\rho} \right).
\end{equation*}
When $D=D_{\rho}$, we simply denote it by $V_{\rho,u}$.
Various $V_{D,u}$'s are depicted in Figure~\ref{fig:my_label}.

\begin{figure}
	\centering
	    \begin{subfigure}{.3\textwidth}
    \centering
    \begin{tikzpicture}[scale=0.25]
    \draw (4,0)--(0,4)--(-4,0)--(0,-4)--cycle;
    \draw (0,0)--(8,0);
    \draw (0,0)--(0,8);
    \draw (0,0)--(-8,0);
    \draw (0,0)--(0,-8);
    
    \node[anchor=north west] at (4,0){$\rho_1$};
    
    \filldraw[red] (4,0) circle (8pt);
    \node[anchor=north west] at (4,0){$\rho_1$};
    \filldraw[red] (-4,0) circle (8pt); \node[anchor=north east] at (-4,0) {$\rho_3$};
    \end{tikzpicture}
    
    \caption{$D=D_2,\\u=(-1,-1,0)$}
    \label{fig:sub1}
    \end{subfigure}\hfill
    \begin{subfigure}{.3\textwidth}
    \begin{tikzpicture}[scale=0.25]
    \draw (4,0)-- (0,4)--(-4,0)--(0,-4)--cycle;
    \draw (0,0)--(8,0);
    \draw (0,0)--(0,8);
    \draw (0,0)--(-8,0);
    \draw (0,0)--(0,-8);
    \filldraw[red] (0,4) circle (8pt);
    \node[anchor=south west] at (0,4) {$\rho_2$};
    \filldraw[red] (0,-4) circle (8pt);
    \node[anchor=north west] at (0,-4) {$\rho_4$};
    \end{tikzpicture}
    \caption{$D=D_5,\\u=(0,-1,-1)$}
    \label{fig:sub3}
    \end{subfigure}\hfill
    \begin{subfigure}{.3\textwidth}
    \begin{tikzpicture}[scale=0.25]
    \draw (4,0)-- (0,4)--(-4,0)--(0,-4)--cycle;
   
    \draw (0,0)--(8,0);
    \draw (0,0)--(0,8);
    \draw (0,0)--(-8,0);
    \draw (0,0)--(0,-8);
    \filldraw[red] (4,0) circle (8pt);
    \node[anchor=north west] at (4,0){$\rho_1$};
    \filldraw[red] (-4,0) circle (8pt);
    \node[anchor=south west] at (0,4) {$\rho_2$};
    \filldraw[red] (0,4) circle (8pt);
    \node[anchor=north east] at (-4,0) {$\rho_3$};
    \filldraw[red] (0,-4) circle (8pt);
    \node[anchor=north west] at (0,-4) {$\rho_4$};
    
    \draw[very thick, red] (4,0)--(0,4)--(-4,0)--(0,-4)--cycle; \draw[->,very thick, red](4,0)--(2,2);
    
    \node at (4,3){$E_1$};
    \node at (-4,3){$E_2$};
    \node at (-4,-3){$E_3$};
    \node at (4,-3){$E_4$};
    \end{tikzpicture}
    \caption{$D=D_5,\\u=(-1,-2,-1)$}
    \label{fig:sub4}
    \end{subfigure}

    \vspace{5mm}   
    
        \hfill
    \begin{subfigure}{.3\textwidth}
    \centering
    \begin{tikzpicture}[scale=0.25]
    \draw (4,0)--(0,4)--(-4,0)--(0,-4)--cycle;
    \draw (0,0)--(8,0);
    \draw (0,0)--(0,8);
    \draw (0,0)--(-8,0);
    \draw (0,0)--(0,-8);
    \end{tikzpicture}
    \caption{$D=\sum_{i=1}^6 D_i, \\ u=(-1,-1,0)$}
    \label{fig:sub5}
    \end{subfigure}\hfill
    \begin{subfigure}{.3\textwidth}
    \begin{tikzpicture}[scale=0.25]
    \draw (4,0)-- (0,4)--(-4,0)--(0,-4)--cycle;
    \draw (0,0)--(8,0);
    \draw (0,0)--(0,8);
    \draw (0,0)--(-8,0);
    \draw (0,0)--(0,-8);
    \filldraw[red] (0,-4) circle (8pt);
    \node[anchor=north west] at (0,-4) {$\rho_4$};
    \end{tikzpicture}
    \caption{$D=\sum_{i=1}^6 D_i,\\u=(0,-1,-1)$}
    \label{fig:sub6}
    \end{subfigure}
	\caption{The simplicial complex $V_{D,u}$ in red for different values of $D$ and $u$, in Example~\ref{ex:3fold}}
	\label{fig:my_label}
\end{figure}
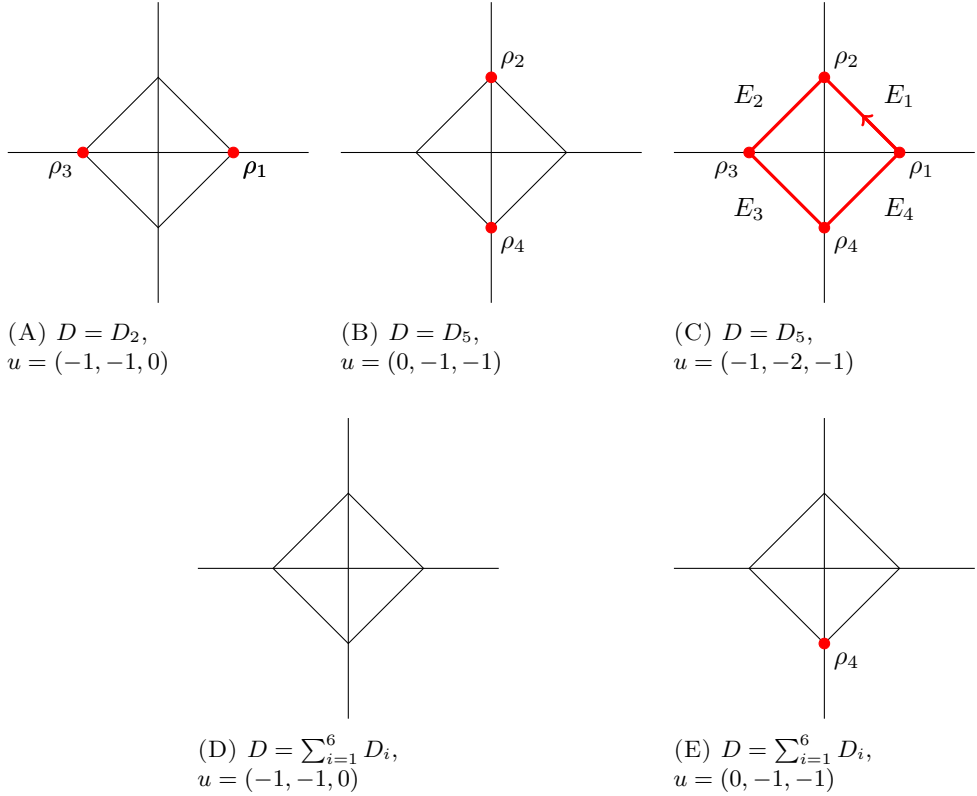

We have the following isomorphisms from \cite[Theorem~9.1.3]{cls} and \cite[Proposition~3.1]{ilten_turo}:
\begin{gather}
	    \rH^i(X,\mathcal{O}_{X}(D))_u \simeq  \widetilde{\rH}^{i-1}(V_{D,u},\KK), \label{eq:OXD} \\
	\rH^i(X,T_X)_u \simeq \bigoplus_{\rho \in \Sigma(1)} \widetilde{\rH}^{i-1}(V_{\rho,u},\KK). \label{eq:TX}
\end{gather}

Now fix $u'=(-1,-1,0)$ and $u''=(0,-1,-1)$ in $M$.
By looking at (A), (B), (C) in Figure~\ref{fig:my_label} and by using \eqref{eq:TX}, we get 
\begin{equation*}
\rH^1(X,T_X)_{u'}=\KK, \quad
\rH^1(X,T_X)_{u''}=\KK, \quad \rH^2(X,T_X)_{u'+u''}=\KK.
\end{equation*}
Now we have all the necessary conditions in \cite[Theorem~4.3]{ilten_turo}, but it is not immediately clear whether the cup product is non-zero.
To see this, we proceed as follows.

We use the notation and the constructions from \cite[\S5]{ilten_turo}.
Consider $Z=\rho_1$, $Z'=\rho_2$ and the simple $1$-cycle $\alpha= E_1+E_2+E_3+E_4$ in $V_{D_5, u'+u''}$ as in  Figure~\ref{fig:sub4}.
We have then $\alpha(Z)=\{E_1,E_4\}$, $\alpha(Z')=\{E_1,E_2\}$, $b_1=b_4=1$, $b_2=b_3=0$.
It is immediate to show $Z *_\alpha Z' \neq 0$.
Now  \cite[Theorem~5.3]{ilten_turo} implies that the cup product is non-zero.
This proves (1) in Proposition~\ref{prop:obstructed}.

Now we consider the toric boundary $D = \sum_{i=1}^6 D_i$ of $X$.
From Figure~\ref{fig:sub5} and Figure~\ref{fig:sub6} we can see that $V_{D,u'}$ is empty and $V_{D,u''}$ is a point.
Therefore by \eqref{eq:OXD} we get $\rH^1(X,\mathcal{O}_X(D))_{u'}=0$ and $\rH^1(X,\mathcal{O}_X(D))_{u''}=0$.
This proves (2) in Proposition~\ref{prop:obstructed}.
\end{example}

\begin{remark}
Let $X$ be the smooth projective toric $3$-fold in Example~\ref{ex:3fold}.
One can prove that $\rH^1(X, T_X)$ has dimension $3$ and the following degrees: $u'=(-1,-1,0)$, $u''=(0,-1,-1)$, $(-1,0,1)$.
Moreover, $\rH^2(X,T_X)$ has dimension $1$ and degree $u' + u'' = (-1,-2,-1)$.
Using  the $M$-grading and the fact that the cup product is non-zero on $\rH^1(X,T_X)_{u'} \otimes_\KK \rH^1(X,T_X)_{u''}$,  one can show that the hull of $\Def{X}$ is $\KK \pow{t_1, t_2, t_3} / (t_1 t_2)$.
\end{remark}

\begin{remark}
One can check that the toric variety in \cite[\S6]{ilten_turo} is a smooth projective toric $3$-fold which has Picard rank $6$ and satisfies the conditions of Proposition~\ref{prop:obstructed}.
The variety described in Example~\ref{ex:3fold} is much simpler and has minimal Picard rank among obstructed smooth projective toric varieties.
\end{remark}

\begin{proof}[{Proof of Proposition~\ref{prop:obstructed}}]
Let $X$ be the smooth projective toric $3$-fold in Example~\ref{ex:3fold}. 
Set $\widetilde{X}=X\times \PP^{n-3}$; the character lattice of the big torus of $\widetilde{X}$ is $\widetilde{M} = M \oplus \ZZ^{n-3}$.
We will show that $\widetilde{X}$ satisfies the conditions (1) and (2) for $v'=(u',0) \in \widetilde{M}$ and $v''=(u'',0) \in \widetilde{M}$.

 If $\Sigma$ is the fan of $X$ and $\Sigma_0$ is the fan of $\mathbb{P}^{n-3}$, then $\widetilde{X}$ can be described by the fan $\widetilde{\Sigma}=\Sigma \times \Sigma_0$.
 The image of $\rho_i\in \Sigma(1)$ in $\widetilde{\Sigma}(1)$ is denoted by $\widetilde{\rho_i}$, and the image of $\tau_i\in \Sigma_0(1)$ in $\widetilde{\Sigma}(1)$ is denoted by $\widetilde{\tau_i}$.
 
The first-order deformation of $X$ corresponding to the degree $u'$ (resp.\ $u''$) induces a first-order deformation of $\widetilde{X} = X\times \PP^{n-3}$ corresponding to the degree $v'$ (resp.\ $v''$) by deforming only the first factor.
From a combinatorial point of view this can be seen from \eqref{eq:TX} and  by observing that
 $V_{{\rho_2},u'}=V_{{\widetilde{\rho_2}},v'}$ and $V_{{\rho_5},u''}=V_{{\widetilde{\rho_5}},v''}$.
 
 For computing the cup product, first observe that $\langle \widetilde{\tau_i},v'\rangle=0= \langle \widetilde{\tau_i},v''\rangle$ for $\tau_i \in \Sigma_0(1)$. Hence by restricting the attention to the cones generated by the rays $\widetilde{\rho_i}$, following the recipe in \cite[\S5]{ilten_turo} is exactly the same as in Example~\ref{ex:3fold}. Thus, we can see that the cup product is non-zero. 
 
 Finally, observe that $V_{D,u'}=V_{\widetilde{D},v'}$ and $V_{D,u''}=V_{\widetilde{D},v''}$, where $D$ (resp.\ $\widetilde{D}$) is the toric boundary of $X$ (resp.\ $\widetilde{X}$). Hence the condition (2) is immediate from \eqref{eq:OXD}. 
\end{proof}

\section{Proof of Theorem~\ref{thm:main}} \label{sec:proof}

Let $X$ be one of the smooth projective toric varieties constructed in Proposition~\ref{prop:obstructed}, let $D$ be the toric boundary of $X$, and let $u', u'' \in M$ be the degrees satisfying the conditions (1) and (2) in Proposition~\ref{prop:obstructed}.

Let $\xi' \in \rH^1(T_X)_{u'}$ and $\xi'' \in \rH^1(T_X)_{u''}$ be such that the cup product $b(\xi', \xi'') \in \rH^2(T_X)_{u' + u''}$ is non-zero -- they exist by (1).
By the polarisation identity for quadratic forms we have that either $q(\xi' + \xi'') \neq 0$ or $q(\xi' - \xi'') \neq 0$.
Hence there exists a linear combination $\xi$ of $\xi'$ and $\xi''$ such that $q(\xi)\neq 0$.
This implies that the first-order deformation of $X$ associated to $\xi$ cannot be extended to the second order.

Let $\Sigma$ be the fan defining the toric variety $X$.
Consider the monomial $F = \prod_{\rho \in \Sigma(1)} x_\rho$ in the Cox ring of $X$.
The zero-locus of $F$ is exactly the toric boundary $D$ of $X$.
Since $F$ is a monomial, the homomorphism $\Gamma(U, \cO_X) \to \Gamma(U, \cO_X(D))$ given by the multiplication by $F$ preserves the natural $M$-gradings for every open affine toric subscheme $U \subseteq X$.
By the bottom exact sequence in the diagram in Proposition~\ref{prop:ses}, the cohomology groups $\rH^i(N_{D/X})$ carry natural $M$-gradings.
Since $X$ is rational, $\cO_X$ does not have higher cohomology, therefore we have an isomorphism of $M$-graded vector spaces
\begin{equation*}
	\rH^1(\cO_X(D)) \simeq \rH^1(N_{D/X}).
\end{equation*}
By (2) in Proposition~\ref{prop:obstructed} we have $\rH^1(N_{D/X})_{u'} = 0$ and $\rH^1(N_{D/X})_{u''} = 0$.

Since the derivatives of $F$ are monomials in Cox coordinates, the multiplication by $\frac{\partial F}{\partial x_\rho}$ preserves the $M$-grading when restricted to every open affine toric subscheme of $X$. The commutativity of the diagram in Proposition~\ref{prop:ses} implies that, for every $i$, the homomorphism
\begin{equation*}
	\rH^i(T_X) \longrightarrow \rH^i(N_{D/X})
\end{equation*}
induced by \eqref{eq:d} preserves the $M$-grading.
From the above result on the $M$-grading of $\rH^1(N_{D/X})$, we deduce that the two elements $\xi', \xi'' \in \rH^1(T_X)$ maps to zero in $\rH^1(N_{D/X})$.
Therefore $\xi$ maps to zero in $\rH^1(N_{D/X})$.

By the long exact sequence in \S\ref{sec:defo_pairs} we have that $\xi$ lies in the image of $\HH^1(A^\bullet) \to \rH^1(T_X)$.
This implies that the first-order deformation $\xi$ of $X$ can be lifted to a first-order deformation $\eta$ of the pair $(X,D)$.
Since $\xi$ cannot be extended to the second order, also the first-order deformation $\eta$ of $(X,D)$ cannot be extended to the second order.
In particular, $\Def{(X,D)}$ is not smooth.

\bibliography{Biblio_logCY}

\end{document}